\documentclass[11pt]{amsart}
\usepackage{texmac}

\bbsymbols{}{A,P,G}
\bbsymbols{b}{H}
\calsymbols{}{E,L,T,B}
\calsymbols{c}{C,A,P,O,K,G,F,T}
\def\theequation{\arabic{equation}}
\def\triv{{\mathbf 1}}

\def\Ksep{K^{\text{sep}}}
\def\bigmatrix#1#2#3#4{\begin{pmatrix}#1&#2\cr#3&#4\cr\end{pmatrix}}
\begin{document}

\title{Euler factors determine local Weil representations}
\author{Tim and Vladimir Dokchitser}
\address{Dept of Mathematics, University Walk, Bristol BS8 1TW, United Kingdom}
\email{tim.dokchitser@bristol.ac.uk}
\address{Emmanuel College, Cambridge CB2 3AP, United Kingdom}
\email{v.dokchitser@dpmms.cam.ac.uk}
\date{December 20, 2011}
\subjclass[2010]{Primary 11F80,  Secondary 14G20, 11S40, 11G07, 11G20, 11G40}

\begin{abstract}
We show that a Frobenius-semisimple Weil representation over a local field $K$ is determined
by its Euler factors over the extensions of $K$. The construction is explicit, and we 
illustrate it for $l$-adic representations attached to elliptic and genus 2 curves.
As an application, we construct an absolutely simple 2-dimensional abelian variety over $\Q$
all of whose quadratic twists must have positive rank, according to the
Birch--Swinnerton-Dyer conjecture.
\end{abstract}

\maketitle

\section{Introduction}
\label{sintro}

We address the question how to determine an $l$-adic representation 
over a local field $K$ from its elementary invariants. For instance, 
the local Langlands correspondence relies on the characterisation 
of representations by their local polynomials (i.e. their Euler factors) 
and `$\epsilon$-factors of pairs'. 
The main result of this paper is that they can be recovered just from their local polynomials
if one allows extensions of the ground field:

\begin{theorem}
\label{imain}
Every Frobenius-semisimple Weil representation $\rho$ over $K$ is uniquely
determined by its local polynomials 
$$
  P(\rho/F,T) = \det \bigl( 1-\Frob_{F}^{-1}T \bigm | \rho^{I_F} \bigr),
$$
where $F$ varies over finite separable extensions of $K$.
\end{theorem}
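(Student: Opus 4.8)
The plan is to recover $\rho$ from its local polynomials by an explicit reconstruction, exploiting that a Frobenius-semisimple Weil representation with finite inertia image is automatically semisimple as a $W_K$-module (Tate), hence determined by its character $\mathrm{tr}\,\rho\colon W_K\to\overline{\Q}_\ell$, equivalently by the multiplicities of its irreducible constituents. The first observation is a translation of the data: if $H_F\subseteq I_F$ is an open $\mathrm{Frob}_F$-stable subgroup on which $\rho$ is trivial, then $\log P(\rho/F,T)=-\sum_{n\ge1}\mathrm{tr}(\mathrm{Frob}_F^{-n}\mid\rho^{I_F})\,T^n/n$ and
\[
  \mathrm{tr}(\mathrm{Frob}_F^{-n}\mid\rho^{I_F})=\tfrac{1}{[I_F:H_F]}\sum_{h\in I_F/H_F}\mathrm{tr}\,\rho(\mathrm{Frob}_F^{-n}h),
\]
so knowing all $P(\rho/F,T)$ is equivalent to knowing all of these ``averaged twisted traces'', for every finite separable $F/K$ and every $n\ge1$.

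The strategy is to bootstrap from the unramified case. If $I_K$ acts trivially on $\rho$, then $P(\rho/F_n,T)=\det(1-\mathrm{Frob}_K^{-n}T\mid\rho)$ for $F_n/K$ unramified of degree $n$, and letting $n$ run recovers the whole multiset of Frobenius eigenvalues, hence $\rho$. For general $\rho$, the open subgroup $\ker(\rho|_{I_K})$ of $I_K=\mathrm{Gal}(\Ksep/K^{\mathrm{ur}})$ cuts out a finite extension $M/K^{\mathrm{ur}}$; writing $M=K^{\mathrm{ur}}(\theta)$ with $\theta\in\Ksep$ and taking $L$ to be the Galois closure of $K(\theta)/K$, one gets a finite Galois $L/K$ with $L^{\mathrm{ur}}=LK^{\mathrm{ur}}\supseteq M$, hence $\rho|_{I_L}$ trivial. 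Thus $\rho$ factors through the finitely generated, virtually cyclic group $G:=W_K/I_L$, sitting in $1\to I_K/I_L\to G\to\Z\to1$; and the unramified case applied to the (now unramified) representation $\rho|_{W_L}$ of $W_L$ recovers $\rho|_{W_L}$ completely from the data. So the theorem reduces to reconstructing the $W_K$-representation $\rho$ from its restriction to the finite-index normal subgroup $W_L$, together with the polynomials $P(\rho/F,T)$ for the finitely many intermediate fields $K\subseteq F\subseteq L$.

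For this descent I would use the standard inductivity of local $L$-factors, $L_K(\mathrm{Ind}_{W_F}^{W_K}\sigma,s)=L_F(\sigma,s)$, which expresses $P(\mathrm{Ind}_{W_F}^{W_K}\sigma/K,T)$ through $P(\sigma/F,\cdot)$, combined with Frobenius reciprocity $\langle\rho,\mathrm{Ind}_{W_F}^{W_K}\sigma\rangle=\langle\rho|_{W_F},\sigma\rangle$ and Clifford theory describing the irreducible $W_K$-constituents of $\rho$ in terms of the $\mathrm{Gal}(L/K)$-orbits of the (now known) irreducible $W_L$-constituents of $\rho|_{W_L}$. Concretely, I would induct on $[L:K]$ along a chain of Galois layers $K=F_0\subset F_1\subset\dots\subset F_r=L$: at each layer one knows $\rho$ on the larger Weil subgroup and must pin it down on the smaller one, using the averaged twisted traces over the intermediate field $F_i$ to resolve the remaining ambiguity.

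I expect this last step to be the main obstacle. Two non-isomorphic Frobenius-semisimple representations of $W_K$ can restrict isomorphically to $W_L$: they may differ by a character of $\mathrm{Gal}(L/K)$, or, more seriously, several $W_L$-constituents forming a single $\mathrm{Gal}(L/K)$-orbit may be glued into a $W_K$-irreducible — possibly a \emph{primitive} one, not induced from any proper subgroup — in genuinely different ways. The heart of the argument is therefore to show that the averaged twisted traces over the intermediate fields determine this gluing and the associated multiplicities; I would carry this out by a dimension-and-multiplicity count, playing the inductivity formula against Clifford theory one layer at a time. Since every ingredient above is effective — extracting Frobenius eigenvalues from power sums, locating $M$ and $L$ from the polynomials, and running the layer-by-layer descent — this also yields the promised explicit reconstruction of $\rho$.
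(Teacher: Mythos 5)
Your reduction to recovering the character (via Frobenius‑semisimplicity), the translation of $P(\rho/F,T)$ into inertia‑averaged traces $\frac{1}{[I_F:H]}\sum_h\operatorname{tr}\rho(\Frob_F^{-n}h)$, and the construction of a finite Galois $L/K$ with $\rho|_{I_L}$ trivial all match the paper's setup. But the step you yourself flag as ``the main obstacle'' — descending from $\rho|_{W_L}$ to $\rho$ — is exactly where the content of the theorem lies, and your plan for it is not only incomplete but provably too weak as stated: you propose to resolve the gluing using only the polynomials $P(\rho/F,T)$ for the intermediate fields $K\subseteq F\subseteq L$ (and their unramified extensions). This data does not suffice. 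Take $K=\Q_{13}$, $L=\Q_{13}(\sqrt[4]{13})$, $G=\Gal(L^{nr}/K)\iso\hat\Z\times C_4$ with topological generators $\Phi=\Frob_L$ and an inertia generator $g$, and compare the two non‑isomorphic sums of characters $\rho=\chi_1\oplus\chi_2$ and $\rho'=\chi_1'\oplus\chi_2'$ determined by $\chi_1(g)=i$, $\chi_1(\Phi^{-1})=-2+3i$ versus $\chi_1'(g)=i$, $\chi_1'(\Phi^{-1})=-2-3i$ (and conjugate values on the other summand). Over $K$ and $K(\sqrt{13})$ and all their unramified extensions the inertia invariants vanish, so all local polynomials equal $1$; over $L$ and its unramified extensions both give $\det(1-\Phi^{-n}T)$. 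So $\rho$ and $\rho'$ agree on $W_L$ and on every field between $K$ and $L^{nr}$, yet are not isomorphic. No amount of Clifford theory or inductivity of $L$‑factors applied to this restricted data can distinguish them.

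What is missing is the paper's key device (Lemma \ref{ramifieddescent}): for each cyclic quotient $\Gal(F/K)$ through which (a summand of) $\rho$ factors, one constructs an auxiliary \emph{totally ramified} extension $L'/K$, in general \emph{not} contained in $L^{nr}$, such that $FL'/L'$ is unramified; its Frobenius is then a product $\Frob_K^{n}g$ with $g\in I_K$ nontrivial, so the single polynomial $P(\rho/L',T)$ reads off the eigenvalues of $\rho$ at that element. In the example above this is $L'=\Q_{13}(\sqrt[4]{26})$, whose Frobenius is $\Phi g$, and $P(\rho/L',T)\ne P(\rho'/L',T)$. The paper then assembles the full character by running over all cyclic subgroups (Step 2) and handles general Weil representations by splitting off unramified twists according to the classes of Frobenius eigenvalues in $\C^\times/\mu_\infty$ (Steps 3--4), a bookkeeping step your outline also omits (your ``unramified case'' recovers eigenvalue multisets, but the passage from the Artin case to arbitrary Weil representations needs the canonical decomposition of Lemma \ref{weildec}). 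To repair your argument you would need to enlarge the family of test fields beyond the intermediate fields of $L/K$ and prove something equivalent to the ramified‑descent lemma; as written, the proposal does not establish the theorem.
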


\setcounter{equation}{0}

\noindent
The proof is explicit, in the sense that it pins down a small number of extensions 
that suffice. 

Many Weil representations arise as restrictions of global representations, e.g.
representations of finite Galois groups of number fields 
(Artin representations) and $l$-adic Tate modules of elliptic curves 
and abelian varieties with potentially good reduction.
In this case, $P(\rho/K,q^{-s})$ is the local Euler factor of the corresponding 
global $L$-function. 
In the setting of elliptic curves, these
local polynomials can be computed using Tate's algorithm over extensions
$F$ of $K$ and point counting over the residue fields. The result says that this data
suffices to reconstruct $T_l E\tensor\Q_l$ explicitly as a Galois representation.

The structure of the paper is as follows.
The theorem is proved in \S\ref{smain}.
In \S\ref{selliptic} we give examples how to use the method 
to describe $l$-adic representations of elliptic curves. In \S\ref{sevil} we give
a similar example for a genus 2 curve. As an application,
we construct an absolutely simple 2-dimensional abelian variety over $\Q$
all of whose quadratic twists must have positive rank, according to the
Birch--Swinnerton-Dyer conjecture.

\begin{acknowledgements}
The first author is supported by a
Royal Society University Research Fellowship. Both authors 
would like to thank DPMMS and the University of Sydney, 
where this research was carried out.
\end{acknowledgements}

\subsection{Notation}

Throughout the paper, we use the following notation:

\medskip
\noindent
\begin{tabular}{ll}
$K$ & local field. \cr
$G_K$ & $\Gal(\Ksep/K)$, the absolute 
         Galois group of $K$. \cr
$I_K$ & the inertia subgroup of $G_K$.\cr
$\Frob_K$ & An (arithmetic) Frobenius, i.e. any element of $G_K$ acting as\cr
&  $x\mapsto x^{|k|}$ on $\bar k$, the algebraic closure of the residue field $k$ of $K$.\cr
$K^{nr}$ & the maximal unramified extension of $K$ in $\Ksep$.\cr
$l$ & prime different from the residue characteristic of $K$.\cr
$\mu_{n}, \mu_\infty$ & the set of $n^{\text{th}}$, respectively all, roots of unity.\cr
$\triv$ & trivial representation.\cr
$\rho^*$ & dual representation of $\rho$.\cr
$\tau$ & the $l$-adic tame character $\tau: I_{K}\to\Z_l$; it
maps $g$ to \smash{$
  \bigl(\frac{g (\sqrt[l^n]\pi)}{\sqrt[l^n]\pi}\bigr)_{n\ge 1}
$}\cr
&in $\invlim \mu_{l^n} \simeq \Z_l$; here $\pi$ is any uniformiser of $K$.\cr
\end{tabular}
\smallskip

Recall that the Weil group $W_K$ is the subgroup
of $G_K$ of all automorphisms that act as an integral power of
Frobenius on the residue field, see~\cite{TatN}~1.4.1.
It contains the inertia group $I_K$, and the quotient
$W_K/I_K\iso\Z$ is generated by $\Frob_K$.

Recall that a Weil representation is a representation
$\rho: W_K\to\GL_n(\C)$ such that $\rho(I_K)$ is finite.
It is called Frobenius-semisimple if the image of some (equivalently, any)
Frobenius element is diagonalisable.

The local polynomial $P(\rho,T)$ is the inverse characteristic polynomial of
$\Frob_K^{-1}$ on the inertia invariants of $\rho$,
$$
  P(\rho,T) = \det \bigl( 1-\Frob_K^{-1}T \bigm | \rho^{I_K} \bigr).
$$
Similarly, for a finite extension $F/K$, we write $P(\rho/F,T)$ for
the local polynomial of the restriction of $\rho$ to $W_F$,
$$
  P(\rho/F,T) = P(\rho|_{W_F},T).
$$

We will consider $l$-adic representations, such as $\tau$ or the $l$-adic Tate modules 
of elliptic curves and abelian varieties; they are known to be Frobenius-semisimple. 
We fix embeddings $\bar\Q_l\injects\C$ to convert
them into complex representations.

\begin{remark*}
Recall that a Weil representation $\rho$ is unramified if $\rho(I_K)=1$.
Every Frobenius-semisimple unramified representation $\rho$ is completely
determined by its local polynomial $P(\rho,T)$.
(And, conversely, any polynomial with constant term 1 comes from
such a representation.)
\end{remark*}

\section{Local factors determine Weil representations}
\label{smain}

\begin{theorem}
\label{main}
Every Frobenius-semisimple Weil representation $\rho$ is uniquely
determined by its local polynomials $P(\rho/F,T)$ over
finite separable extensions $F/K$.
\end{theorem}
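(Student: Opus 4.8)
The plan is to reduce an arbitrary Frobenius-semisimple Weil representation to a sum of pieces whose local polynomials are easy to read off, and then to untangle that sum by varying the field $F$. First I would use Frobenius-semisimplicity together with the fact that $\rho(I_K)$ is finite to write $\rho$ as a direct sum of irreducible Weil representations. Each irreducible $\sigma$ is, up to an unramified twist, induced from a character: concretely, $I_K$ acts through a finite quotient, so after passing to a finite extension the inertia action becomes trivial on an irreducible constituent, and Mackey/Clifford theory presents $\sigma$ as $\Ind_{W_E}^{W_K}\chi$ for a suitable finite extension $E/K$ and a character $\chi$ of $W_E$ whose restriction to $I_E$ has finite order. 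Thus it suffices to determine multiplicities of such monomial pieces.

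The second step is to compute $P((\Ind_{W_E}^{W_K}\chi)/F,T)$ explicitly. By Mackey's formula, the restriction to $W_F$ of an induced representation decomposes according to double cosets $W_E\backslash W_K/W_F$, i.e. according to the primes of the compositum and the local degrees; each piece is again monomial, induced from $W_{E'}$ to $W_F$ for the various $E' = EF$-type fields, twisted by a conjugate of $\chi$. Taking inertia invariants and then the characteristic polynomial of $\Frob_F^{-1}$, one gets a product over these pieces of factors of the form $1 - \zeta q_F^{?} T^{f}$, where $f$ is the residue degree of $E'/F$, $\zeta$ is a root of unity coming from $\chi|_{I_{E'}}$, and $q_F$ is the residue field size of $F$; the inertia invariants are nonzero exactly when $\chi$ is unramified on $I_{E'}$. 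The key observation is that by choosing $F$ to contain enough of the ramification data — e.g. $F = E^{nr}$-like extensions, or $F/K$ large enough to split all the inertia action — one can arrange that the only surviving contributions to $P(\rho/F,T)$ come from the specific constituent one is probing, so its multiplicity is recovered as a coefficient.

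Concretely, I would proceed inductively on the dimension (or on the order of $\rho(I_K)$): pick the monomial constituents of smallest "conductor" first. Over a well-chosen $F$, the representation $\rho|_{W_F}$ becomes a sum of an unramified part (whose local polynomial is determined, by the Remark in the introduction) plus ramified pieces contributing trivially to $P(\rho/F,T)$; matching the unramified part across a family of such $F$ pins down the unramified twists and the inducing characters of the low-conductor constituents, and then one subtracts them off and repeats. The construction is explicit because at each stage the needed $F$ is described by the ramification filtration of $\rho(I_K)$ and the orders of the relevant tame characters $\tau$, so only finitely many extensions — bounded in terms of $\dim\rho$ and the order of $\rho(I_K)$ — are used.

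The main obstacle I anticipate is the bookkeeping in the Mackey decomposition when several distinct monomial constituents have overlapping ramification, so that no single $F$ isolates one of them cleanly; handling this requires care in the inductive ordering (peeling off constituents in the right order, e.g. by the size of the fixed field of the kernel of the inertia action) and in verifying that after each subtraction the remaining representation is still Frobenius-semisimple with strictly smaller invariants. A secondary technical point is the passage between $l$-adic and complex coefficients and ensuring the roots of unity appearing in the local polynomials genuinely determine the characters $\chi$ and not merely their values on a proper subgroup — this is where varying $F$ through totally ramified extensions of prime-power degree, detected by $\tau$, does the work.
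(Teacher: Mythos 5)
There is a genuine gap at the very first step: you assert that every irreducible Weil representation is, up to an unramified twist, induced from a character $\Ind_{W_E}^{W_K}\chi$ of a finite extension. This is false. Clifford theory applied to the normal subgroup $I_K$ only yields an induced presentation when the relevant isotypic component is one-dimensional or the stabiliser is proper; in general local Galois groups have non-monomial finite quotients. Concretely, $G_{\Q_2}$ has quotients isomorphic to $\mathrm{SL}_2(\F_3)$ (e.g.\ $\Gal(\Q_2(E[3])/\Q_2)$ for suitable elliptic curves $E/\Q_2$ with potentially good reduction), and the faithful $2$-dimensional irreducible representations of $\mathrm{SL}_2(\F_3)$ are primitive: the group has no subgroup of index $2$, so they are not induced from any character. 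Your entire computational engine (Mackey decomposition of $\Ind\chi$ restricted to $W_F$, local factors of the form $1-\zeta q_F^{?}T^f$) is built on this false reduction, so the argument does not cover all Frobenius-semisimple representations. The paper sidesteps monomiality entirely: for an Artin representation it suffices, by ordinary character theory, to recover the restriction to each cyclic subgroup $\langle g\rangle$ of the finite Galois group, since that gives the character value at $g$; no induced model of the representation is ever needed.

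The second gap is that the step where the real difficulty lives --- choosing $F$ so that exactly one constituent ``survives'' in $P(\rho/F,T)$ --- is left as an acknowledged obstacle rather than resolved. If $F/K$ is ``large enough to split all the inertia action,'' then \emph{every} constituent becomes unramified over $F$ and contributes to $P(\rho/F,T)$, and you recover only the multiset of Frobenius eigenvalues over $F$, which does not determine the inertia action; if $F$ is small, the ramified constituents contribute the factor $1$ and are invisible. The resolution in the paper is Lemma~\ref{ramifieddescent}: for a cyclic extension $F/K$ of degree $n$ and ramification degree $e$ one can find a \emph{totally ramified} cyclic $L/K$ of degree $e$ with $FL/L$ unramified of degree $n$, so that restriction $\Gal(FL/L)\to\Gal(F/K)$ is an isomorphism and no information is lost when one reads off $P(\rho/L,T)$. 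Nothing in your proposal plays this role (``$E^{nr}$-like extensions'' are not finite extensions, and the inductive ``peeling off'' by conductor is not shown to terminate or to isolate anything). Finally, your plan for separating the unramified twists (``matching the unramified part across a family of such $F$'') needs the precise mechanism of the paper's Step~4: the twists are distinguished by the classes of the Frobenius eigenvalues in $\C^\times/\mu_\infty$, which is what makes the factorisation of $P(\rho/F,T)$ into the pieces $P(\rho_i/F,T)$ canonical for every $F$ simultaneously.
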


\begin{proof}

{\bf Step 1: Cyclic}.
Suppose $\rho$ factors through a finite cyclic group $G=\Gal(F/K)\iso C_n$
and $F/K$ has ramification degree $e$.

By Lemma \ref{ramifieddescent}, there is a cyclic totally ramified extension $L/K$ 
of degree $e$ such that $FL/L$ is unramified of degree $n$. 
The restriction map $\Gal(FL/L)\to\Gal(F/K)$ is an isomorphism, as it is clearly 
injective and both groups have order $n$. 
So $\rho/L$ determines $\rho$, and $\rho/L$ can be recovered from its
local polynomial $P(\rho/L,T)$.

{\bf Step 2: Artin to cyclic}.
Suppose $\rho$ factors through a finite quotient, equivalently
it has local polynomial $P(\rho/F,T)=(1-T)^{\dim\rho}$ over some Galois
extension $F/K$. By character theory for the finite group $G=\Gal(F/K)$, it
is enough to recover the character of $\rho$. Thus it suffices to recover
the restriction of $\rho$ to every cyclic subgroup $\langle g\rangle \< G$
since this gives the value of the character on the conjugacy class of $g$.
This is done in Step 1.

{\bf Step 3: Twists}. Suppose $\rho$ is a twist of an Artin representations 
by a \hbox{1-dimensional} unramified character.
Over a sufficiently large Galois 
extension $F/K$, we have $P(\rho/F,T)=(1-\lambda T)^{\dim\rho}$.
Let $f$ be the residue degree of $F/K$ and define an unramified character
$\phi$ of $W_K$ by $\Frob_K\mapsto\sqrt[f]\lambda$. Then $\rho\tensor\phi^{-1}$
is an Artin representation, and it can be recovered from its local polynomials
$P(\rho\tensor\phi^{-1}/L,T)=P(\rho,T/\sqrt[f]\lambda)$ by Step 2.

{\bf Step 4: Weil to Artin}. Finally, let $\rho$ be a general Weil representation.
By Lemma \ref{weildec}, there is a unique decomposition $\rho=\bigoplus_i \rho_i$, 
such that each summand is a twist of an Artin representation $A_i$ by a 
1-dimensional unramified character $\psi_i$, and the classes of 
$\psi_i(\Frob_K)\in\C^\times/\mu_\infty$ are distinct. 
For any $F/K$,  
$$
  P(\rho/F,T) = \prod_i P(\rho_i/F,T).
$$
The roots of the $i^{\text{th}}$ factor are those 
of a given class in $\C^\times/\mu_\infty$ (namely, $[\psi_i(\Frob_K)^{f_{F/K}}]$, where 
$f_{F/K}$ is the residue degree).
By Step 3, this data determines the $\rho_i$ uniquely, and hence $\rho$ as well.
\end{proof}

\begin{lemma}
\label{ramifieddescent}
Let $F/K$ be a cyclic extension of degree $n$ and ramification \hbox{degree~$e$}.
Then there exists a cyclic totally ramified extension $L/K$ of \hbox{degree~$e$}
such that $FL/L$ is unramified of degree $n$.
\end{lemma}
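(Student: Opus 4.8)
The plan is to build $L$ inside the compositum $M=UF$, where $U/K$ is the unramified extension of degree $n$. Write $n=ef$ with $f$ the residue degree of $F/K$, and let $F_0$ be the maximal unramified subextension of $F/K$, so $[F_0:K]=f$. Since $f\mid n$ we have $F_0\subseteq U$, and in fact $U\cap F=F_0$ (the intersection is unramified over $K$ and contained in $F$). As $U/K$ and $F/K$ are both cyclic, $M/K$ is abelian, and restriction identifies $\Gal(M/K)$ with $\{(\sigma,\tau)\in\Gal(U/K)\times\Gal(F/K):\sigma|_{F_0}=\tau|_{F_0}\}$, a group of order $n^2/f=ne$. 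One checks that $M/U$ is totally ramified of degree $e$ — its Galois group is $\Gal(F/F_0)$, the ramified part of $F/K$ — so $U$ is the maximal unramified subextension of $M/K$.

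The key step is to choose $h=(\sigma,\tau)\in\Gal(M/K)$ with $\sigma$ a generator of $\Gal(U/K)$ and $\tau$ a generator of $\Gal(F/K)$. To produce it, start with any generator $\tau$ of $\Gal(F/K)$; its restriction generates $\Gal(F_0/K)$, and any generator of $\Gal(F_0/K)$ lifts to a generator of $\Gal(U/K)$ because the reduction $(\Z/n\Z)^\times\to(\Z/f\Z)^\times$ is surjective. Then $h$ has order $n$. I would set $L=M^{\langle h\rangle}$, so that $[L:K]=[M:K]/n=e$.

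It remains to verify the three properties, and these all flow from one observation: since $\sigma$, $\tau$, and hence $h$ all have order $n$, a power $h^k$ is trivial as soon as $\sigma^k=1$ or $\tau^k=1$; so $\langle h\rangle$ meets both $\Gal(M/U)$ and $\Gal(M/F)$ trivially. Comparing orders then gives $\langle h\rangle\cdot\Gal(M/U)=\Gal(M/K)$, whence $L\cap U=K$; since $U$ is the maximal unramified subextension of $M/K$, the extension $L/K$ is totally ramified, and of degree $e$. Similarly $FL=M^{\langle h\rangle\cap\Gal(M/F)}=M$, so $[FL:L]=[M:L]=n$; and $UL=M^{\langle h\rangle\cap\Gal(M/U)}=M$, so $FL=M=UL$ is unramified over $L$ (a compositum with the unramified $U/K$). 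Finally $\langle h\rangle\cap\Gal(M/F)=1$ together with the order count gives $\Gal(M/K)=\langle h\rangle\times\Gal(M/F)$, so $\Gal(L/K)\iso\Gal(M/F)\iso\Gal(U/F_0)$ is cyclic of order $e$.

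I expect the only real obstacle to be the construction of $h$ — namely, matching a generator of $\Gal(F/K)$ with a generator of $\Gal(U/K)$ inducing the same automorphism of $F_0$ — which reduces to the surjectivity of $(\Z/n\Z)^\times\to(\Z/f\Z)^\times$. After that, everything is routine manipulation of the Galois correspondence inside the abelian extension $M=UF$.
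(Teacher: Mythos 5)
Your argument is correct, and it is in substance the Galois-group dual of the paper's character-theoretic proof, organised differently enough to be worth comparing. The paper first reduces to the case of prime-power $n$ (general $n$ then follows by taking composita), picks a faithful character $\chi$ of $\Gal(F/K)$ together with an unramified character $\phi$ of order $n$ satisfying $\phi(\Frob_K)=\chi(\Frob_K)^{-1}$ when $F/K$ is not totally ramified, and defines $L$ as the field cut out by $\chi\phi$; unwinding this, $\Gal(UF/L)=\ker(\chi\phi)$ is precisely a cyclic subgroup of order $n$ generated by an element whose restrictions to $U$ and to $F$ both generate --- that is, your $\langle h\rangle$. What you do differently is to construct this subgroup directly inside $\Gal(UF/K)$, and to replace the prime-power reduction by the surjectivity of $(\Z/n\Z)^\times\to(\Z/f\Z)^\times$: that surjectivity is exactly what lets you match a generator of $\Gal(F/K)$ with a generator of $\Gal(U/K)$ agreeing on $F_0$, and it plays the role that the reduction to prime powers plays in the paper (for prime-power $n$ one checks that $\chi(\Frob_K)$ automatically has order $n$, so the required $\phi$ exists; for general $n$ it need not, which is why the paper cannot skip the reduction). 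Your version thus handles all $n$ uniformly at the cost of a somewhat longer verification, and all the steps in your final paragraph --- the triviality of $\langle h\rangle\cap\Gal(M/U)$ and of $\langle h\rangle\cap\Gal(M/F)$, the order counts, and the resulting direct-product decomposition giving both the total ramification and the cyclicity of $\Gal(L/K)$ --- are sound.
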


\begin{proof}
It is enough prove the statement when $n$ is a prime power, since the general
case follows immediately by taking the compositum of the corresponding prime-power
extensions.

Let $\chi$ be a primitive character of $\Gal(F/K)$, and
fix a Frobenius element $\Frob_K\in G_K$.
We have two cases:
If $F/K$ is not totally ramified,
pick an unramified character $\phi$ of $G_K$ of order $n$
with $\phi(\Frob_K)=\chi(\Frob_K)^{-1}$.
Otherwise,
pick any unramified character $\phi$ of $G_K$ of order $n$.
In either case, let $L$ be the field cut out by~$\chi\phi$.

Since $\chi\phi$ is faithful on $I_{F^{nr}/K}$ and has order $e_{F/K}$,
the field $L/K$ is totally ramified of degree $e$. Moreover,
$\chi|_{G_L}=\phi^{-1}|_{G_L}$, so $FL/L$ has degree $n$, as required.
\end{proof}

\begin{lemma}
\label{weildec}
Every Frobenius-semisimple Weil representation over $K$ is a direct sum
$\rho=\bigoplus A_i\tensor \psi_i$, where each $A_i$ factors through a finite
quotient, and the $\psi_i$ are 1-dimensional and unramified.
There exists such a decomposition such that the classes
of $\psi_i(\Frob_K)$ in $\C^\times/\mu_\infty$ are distinct. The latter
decomposition is unique in the sense that the components $A_i\tensor\psi_i$
are uniquely determined up to order.
\end{lemma}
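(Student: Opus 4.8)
The plan is to prove this in two stages: first establish that \emph{some} decomposition of the stated form exists, then refine it to the unique one with distinct Frobenius classes. For the existence of a decomposition $\rho = \bigoplus A_i \otimes \psi_i$, I would use the fact that the Weil group $W_K$ sits in an exact sequence $1 \to I_K \to W_K \to \Z \to 0$, with $I_K$ acting through a finite quotient since $\rho(I_K)$ is finite. Frobenius-semisimplicity means $\rho(\Frob_K)$ is diagonalisable; I would decompose $\rho$ into generalised eigenspaces for $\rho(\Frob_K)$, which are in fact genuine eigenspaces, and use that $I_K$ is normal in $W_K$ so that its representation theory is compatible with the Frobenius action up to the conjugation action of $\Frob_K$ on $\rho(I_K)$. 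Concretely, one can pick a sufficiently large Galois extension $F/K$ through which $\rho|_{I_K}$ factors; then $\rho$ becomes a representation of a group that is an extension of $\Z$ by the finite group $\Gal(F/K)$, and after twisting by a suitable unramified character one reduces each Jordan-Hölder-type piece to an Artin representation tensored with an unramified character. The cleanest route is probably: let $\lambda_1, \dots, \lambda_r$ be the distinct eigenvalues of $\rho(\Frob_K)$ modulo $\mu_\infty$, choose unramified characters $\psi_i$ with $\psi_i(\Frob_K)$ in the class of $\lambda_i$, and show $\rho \otimes \psi_i^{-1}$ restricted to the $\lambda_i$-part has finite image.

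For the grouping into distinct classes: given any decomposition $\rho = \bigoplus_j B_j \otimes \phi_j$ with the $B_j$ Artin and $\phi_j$ unramified $1$-dimensional, I would collect together all indices $j$ for which $\phi_j(\Frob_K)$ lies in a fixed class $c \in \C^\times/\mu_\infty$. For such a group, all the $\phi_j$ differ from a fixed $\phi_c$ (a chosen representative) by a character of finite order, which is itself an Artin character; absorbing that finite-order character into the $B_j$ shows $\bigoplus_{j : [\phi_j(\Frob_K)] = c} B_j \otimes \phi_j \cong \bigl(\bigoplus_j B_j \otimes (\text{finite-order twist})\bigr) \otimes \phi_c = A_c \otimes \phi_c$ with $A_c$ Artin. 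This produces the required decomposition with distinct classes.

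For uniqueness, suppose $\bigoplus_i A_i \otimes \psi_i \cong \bigoplus_i A_i' \otimes \psi_i'$ with both families having distinct Frobenius classes. The subrepresentation of $\rho$ on which $\rho(\Frob_K)$ has eigenvalues in a fixed class $c \in \C^\times/\mu_\infty$ is intrinsic to $\rho$ — it does not depend on the decomposition — and it equals $A_i \otimes \psi_i$ for the unique $i$ with $[\psi_i(\Frob_K)] = c$ (and likewise $A_{i'}' \otimes \psi_{i'}'$). Hence for each class $c$ the pieces $A_i \otimes \psi_i$ and $A_{i'}' \otimes \psi_{i'}'$ are the same subrepresentation of $\rho$, so the multiset of summands $\{A_i \otimes \psi_i\}$ is determined. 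The main obstacle is the first stage: carefully justifying that a single Frobenius eigenvalue class yields a subrepresentation that is an Artin twist, i.e. that after untwisting the Frobenius acts with finite order on it. The key input is that $\rho(I_K)$ is finite together with Frobenius-semisimplicity, which forces the eigenvalues of $\rho(\Frob_K)$ on each $I_K$-isotypic component to differ only by roots of unity (since $\Frob_K$ permutes the finitely many eigenvalue-ratios coming from the conjugation action on the finite group $\rho(I_K)$); once this is in place, untwisting by $\psi_i$ makes the image of $\Frob_K$ of finite order, and together with finite $\rho(I_K)$ this gives finite image. I would isolate this eigenvalue-class argument as the technical heart and present the grouping and uniqueness as formal consequences.
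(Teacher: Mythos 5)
Your overall architecture is sound, and your second and third paragraphs (grouping summands with the same Frobenius class by absorbing a finite-order unramified character into the Artin part, and proving uniqueness by observing that the sum of $\Frob_K$-eigenspaces with eigenvalue in a fixed class of $\C^\times/\mu_\infty$ is intrinsic and equals the corresponding summand) are exactly the paper's argument, if anything slightly more explicit on uniqueness than the paper is. The problem is in what you yourself flag as the technical heart of the existence step. The claim that ``the eigenvalues of $\rho(\Frob_K)$ on each $I_K$-isotypic component differ only by roots of unity'' is false: take $\rho=\triv\oplus\psi$ with $\psi$ unramified and $\psi(\Frob_K)=2$. Here $\rho(I_K)=1$, so there is a single $I_K$-isotypic component (the whole space), on which Frobenius has eigenvalues $1$ and $2$. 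The conjugation action of $\Frob_K$ on the finite group $\rho(I_K)$ controls how Frobenius permutes the isotypic components, but says nothing about its eigenvalues \emph{within} one component. Consequently your proposed justification does not show what you need, namely that the sum of $\Frob_K$-eigenspaces with eigenvalue in a fixed class modulo $\mu_\infty$ is $I_K$-stable; without that, ``the $\lambda_i$-part'' is not yet a subrepresentation and cannot be untwisted.

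The missing idea (and the paper's one-line proof of existence) is to produce a \emph{central} diagonalisable element whose eigenspaces cut out the pieces. Choose a finite Galois extension $F/K$ with $\rho(I_F)=1$. For any $g\in W_K$ one has $g\Frob_F g^{-1}\Frob_F^{-1}\in W_F\cap I_K=I_F$, so $\rho(\Frob_F)$ is central in $\rho(W_K)$; it is also diagonalisable, so its eigenspaces are $W_K$-subrepresentations. On the $\mu$-eigenspace, twisting by the unramified character $\psi$ with $\psi(\Frob_K)^{f_{F/K}}=\mu$ kills the action of all of $W_F$, so the twist factors through $\Gal(F/K)$ and is an Artin representation; moreover all $\Frob_K$-eigenvalues on that eigenspace lie in a single class modulo $\mu_\infty$, which is what makes your grouping and uniqueness steps go through. (Equivalently, one can take a power $\Frob_K^m$ that centralises the finite group $\rho(I_K)$ and argue with its eigenspaces.) With this substitution the rest of your proof is correct.
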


\begin{proof}
The first claim is standard: take a sufficiently large finite Galois extension $F/K$
such that $\rho/F$ is unramified. Then $\rho(\Frob_F)$ is central
in $\rho(W_K)$, so the eigenspaces of $\Frob_F$ are $W_K$-subrepresentations. Their
appropriate unramified twists give the $A_i$. 

Now, if $[\psi_i(\Frob_K)]=[\psi_j(\Frob_K)]$ in $\C^\times/\mu_\infty$, we may replace
$\psi_j$ by $\psi_i$ and $A_j$ by $A_j\tensor(\psi_j\psi_i^{-1})$ (which is also an
Artin representation) and group the two summands. Repeating this process, we get 
a decomposition as in the second claim. It is easy to see that it is unique.
\end{proof}

\begin{remark}
There are important local Galois representations that are not Weil representations, e.g. the Tate module of an elliptic curve
with multiplicative reduction. These are dealt with by using Weil-Deligne representations $W=(\rho,N)$, where $\rho$ is a Weil
representation and $N$ is a nilpotent endomorphism responsible for the `infinite' part of the inertia group \cite{TatN}. 
Their local factors are not sufficient to recover the representation: the local polynomial of $W$
is the same as that of $\ker N$ over any field, so $W$ and $\ker N$ are indistinguishable.
\end{remark}

\section{Elliptic curve examples}
\label{selliptic}

Let $E$ be an elliptic curve defined over a local field $K$ with residue
field $k=\F_q$. Consider the $l$-adic representation associated to~$E$
$$
  \rho_E: G_K \to \GL_2(\C) 
$$
defined by the Galois action on $H^1_{\text{\'et}}(E,\Q_l)\tensor\C = T_l E^*\tensor_{\Zl}\C$
for any prime $l\ne\vchar k$.

If $E/K$ has good reduction, then $\rho_E$ is unramified by the 
N\'eron-Ogg-Shafarevich criterion,
so it is completely determined by the characteristic polynomial of $\rho_E(\Frob_K)$. 
This, in turn, is determined by the number of points on the reduced 
curve $\tilde E/k$,
$$
  P(\rho_E,T) = 1 -a T + qT^2, \qquad a=q+1-|\tilde E(k)|.
$$

If $E/K$ has potentially multiplicative reduction, then it acquires split multiplicative 
reduction over some 
separable 
extension $L/K$ of degree at most 2.
The action of $G_K$ on $T_l E$ is described by the theory of the Tate curve (\cite{Sil2} Lemma V.5.2, Excs. 5.11, 5.13 or \cite{SerA} IV A.1):
$$
  \rho_E(\Frob_K)=\chi(\Frob_K)
  \bigmatrix 100{q^{-1}}, \qquad
  \rho_E(g) = \chi(g) \bigmatrix1{\tau(g)}01,  \quad g\in I_K,
$$
where $\chi:\Gal(L/K)\to\C^\times$ is the unique primitive character
and $\tau$ is the $l$-adic tame character. 

Finally, suppose $E/K$ has additive potentially good reduction. 
By Theorem \ref{main}, the $l$-adic representation $\rho_E$ can be recovered 
from the local factors $P(\rho_E/F,T)$ over extensions $F/K$. 
The proof of the theorem is constructive, and we illustrate it in this section
by determining $\rho_E$ for two specific elliptic curves. The idea is to use
several fields where $E$ acquires good reduction.

\begin{example}
\label{exell1}
Consider the elliptic curve
$$
  E/\Q_{13}: \>\> y^2 = x^3 - 26x.
$$
It has additive reduction of type \III{} and acquires good reduction over 
the field $L=\Q_{13}(\sqrt[4]{13})$, a cyclic quartic totally ramified extension of $\Q_{13}$. 
So $\rho_E: G_{\Q_{13}}\to\GL_2(\C)$ factors through
$$
  G = \Gal(\Q_{13}^{nr}(\sqrt[4]{13})/\Q_{13}) \>\>\iso\>\> \hat\Z\times C_4.
$$
This group is generated by any Frobenius element $\Phi=\Frob_L$ of $G_L$
and the inertia element $g$ that maps $\sqrt[4]{13}$ to $\iota\sqrt[4]{13}$.
(We fix $\iota$ to be the 4th root of unity congruent to 5 mod 13.)
As $G$ is abelian, Frobenius acts semisimply and $g$ acts faithfully with
determinant 1 (by the Weil pairing), $\rho_E$ is diagonalisable and
$$
  \rho_E(\Phi^{-1}) \mapsto \smallmatrix{\alpha}00{\beta}, \qquad
  \rho_E(g)=\smallmatrix{i}00{-i}.
$$
Moreover, counting points of the reduction of $E$ over the residue field $\F_{13}$ of
$L$, we find $P(\rho_E/L,T)=1+4T+13T^2$. So $\{\alpha,\beta\}=\{-2+3i,-2-3i\}$,
and it remains to determine which is which. 

As in Lemma \ref{ramifieddescent}, let $L'=\Q_{13}(\sqrt[4]{26})$, so that 
$LL'/L'$ is quartic unramified. 
Observe that $\Phi\cdot g$ is a Frobenius element in $G_{L'}$. Indeed,
it acts correctly on the residue field, and it fixes $\sqrt[4]{26}$:
\beq
  \frac{(\Phi\cdot g)(\sqrt[4]{26})}{\sqrt[4]{26}} &=&
  \frac{\Phi(g(\sqrt[4]{2})g(\sqrt[4]{13}))}{\sqrt[4]{26}} =
  \frac{\Phi(\sqrt[4]{2})}{\sqrt[4]{2}}\frac{\Phi(\iota\sqrt[4]{13})}{\sqrt[4]{13}}\cr
  &\equiv&
    (\sqrt[4]{2})^{13-1} \iota \equiv 8\cdot 5\equiv 1\mod \pi_{L'}.
\eeq
As the left-hand side is a 4th root of unity, it must be 1.

Counting points again, we find
$$
  P(\rho_E/L',T)=1-6T+13T^2=(1-(3-2i)T)(1-(3+2i)T),
$$  
so that the eigenvalues of $(\Phi\cdot g)^{-1}$ are $3\pm 2i$.
Comparing the eigenvalues of $\Phi^{-1},g$ and $(\Phi\cdot g)^{-1}$, we see that 
$\Phi^{-1}$ must act as $-2+3i$ on the $i$-eigenspace of $g$, and $\rho_E$ is given by
$$
  \rho_E(\Phi^{-1}) = \smallmatrix{-2+3i}00{-2-3i}, \qquad
  \rho_E(g) = \smallmatrix{i}00{-i}.
$$
\end{example}

\begin{example}
Consider the elliptic curve
$$
  E/\Q_2: \>\> y^2 = x^3 - x.
$$
It has additive reduction, and acquires good reduction over $F=\Q_2(E[3])$ 
(\cite{ST} Cor. 2 or \cite{Sil1} Exc. 7.9). 
This field is a splitting field of $x^8 - 288 x^4 - 6912$ (see \cite{Root2} \S3), 
equivalently of $x^8+6x^4-3$. Thus,
$$
  F = \Q_2 (i,\sqrt{3},\alpha), \qquad \alpha = \sqrt[4]{2\sqrt 3-3}.
$$
Set $z=\frac{\sqrt 3+i}2$, a fixed primitive $12^{\text{th}}$ root of unity in $F$, and 
let $\beta$ be the fourth root of $-2\sqrt 3-3$ given by 
$$
  \beta = (z^2+z)\alpha.
$$
The roots of $x^8+6x^4-3$ in $F$ are $i^k\alpha$ and $i^k\beta$ for $k=0,1,2,3$.
The Galois group $G=\Gal(F/\Q_2)$ is the semi-dihedral group of order 16, and is generated 
by an 8-cycle and an involution which fixes $\alpha$,
\beq
  s: &\alpha\mapsto -i\beta\mapsto i\alpha\mapsto\beta\mapsto 
          -\alpha\mapsto i\beta\mapsto -i\alpha\mapsto -\beta\mapsto \alpha\cr
  t: & 
  i\alpha\leftrightarrow-i\alpha, 
  \beta\leftrightarrow-i\beta,  
  -\beta\leftrightarrow i\beta.
\eeq
(In fact, $t$ is complex conjugation in $\Gal(\Q(E[3])/\Q)\iso G$.)
The inertia subgroup of $G$ is the quaternion group $Q_8$, generated by 
$s^2$ and $st$.

Fix a $\sqrt{-2}$ in $\C$. By \cite{Root2} Lemma 1, 
$$
  \rho_E \iso A \tensor \eta^{-1}, 
$$
where $\eta: G_{\Q_2}\to\C^\times$ is the unramified character with 
$\eta(\Frob_{\Q_2})=\sqrt{-2}$
and $A$ an irreducible 2-dimensional representation of $G$, which is faithful on
the inertia subgroup. Inspecting the character table of $G$,
we see that there are two possibilities for $A$, called $\rho$ and $\rho'$ in Table \ref{SD16table}.

\begin{table}[h]
$$
\begin{array}{c|rrrrrrr}
\text{order}  & 1 &  2 &  4 &  8 &  8 &  2 &  4    \cr
\text{class}  & 1 & s^4 & s^{\pm 2} &  s,s^3 & s^5,s^7 &  
                s^{2k}t & s^{2k+1}t \cr
\hline
\triv         & 1 & 1 & 1 & 1 & 1 & 1 & 1 \cr
\epsilon_1    & 1 & 1 & 1 & 1 & 1 & -1 & -1 \cr
\epsilon_2    & 1 & 1 & 1 & -1 & -1 & -1 & 1 \cr
\epsilon_3    & 1 & 1 & 1 & -1 & -1 & 1 & -1 \cr
U             & 2 & 2 & -2 & 0 & 0 & 0 & 0 \cr
\rho          & 2 & -2 & 0 & -\sqrt{-2} & \sqrt{-2} & 0 & 0 \cr
\rho'         & 2 & -2 & 0 & \sqrt{-2} & -\sqrt{-2} & 0 & 0 \cr
\end{array}
$$
\caption{Character table of $G$}
\label{SD16table}
\end{table}

As in Step 2 of the proof of Theorem \ref{main},
to determine what $A$ is we restrict to the (unique) cyclic subgroup of $G$ which 
distinguishes between $\rho$ and $\rho'$, namely 
$$
  \Gal(F/\Q_2(i)) = \langle s\rangle \iso C_8.
$$
Its inertia subgroup is $\langle s^2\rangle \iso C_4$, so $F/\Q_2(i)$ has 
residue degree 2 and ramification degree 4. 
As in Step 1 of Theorem \ref{main}, consider
$$
  L= \Q_2(i,\sqrt[4]{-3+2i}).
$$
It has the property that $FL/L$ is octic unramified. Using e.g. Artin representation 
%
machinery in Magma \cite{Magma}, we find that
$$
  P(\rho/L,T) = 1+\sqrt{-2}T-T^2, \qquad 
  P(\rho'/L,T) = 1-\sqrt{-2}T-T^2.
$$
Moreover, $E$ has good reduction over $L$, and counting points gives
$$
  P(\rho_E/L,T) = P(A\tensor\eta^{-1}/L,T) = 1-2T+2T^2.
$$
Twisting by $\eta$, we get $P(A/L,T)=1+\sqrt{-2}T-T^2$, in other words $A=\rho$ and
$$
  \rho_E \iso \rho \tensor \eta^{-1}.
$$
\end{example}

\section{A hyperelliptic curve}
\label{sevil}

We illustrate the technique for identifying $l$-adic representations in the setting of Jacobians of hyperelliptic curves. 
Their local polynomials, at least when the genus is 2, can be computed from the classification of reduction types and 
point counting on the reduced curve. 

The specific example was chosen to exhibit an absolutely simple abelian variety over $\Q$
all of whose quadratic twists have positive rank, according to the
Birch--Swinnerton-Dyer conjecture. Recall that the conjecture says that that the Mordell-Weil rank of an abelian variety $A/\Q$ 
equals to the order of vanishing of the $L$-function $L(A,s)$ at $s=1$. The $L$-function is supposed to satisfy a functional
equation relating $s\leftrightarrow 2\!-\!s$. Consequently, if the sign in the functional equation is $-1$, the order of vanishing
must be odd, and the rank must be non-zero. This sign is the global root number $w(A/\Q)$, and it is defined in terms of the
Galois action on the local $l$-adic representations $H^1_{\text{\'et}}(A/\Q_p,\Q_l)$ for each $p$.
We will construct an absolutely simple abelian variety $J/\Q$ whose global root number, 
and that of each of its quadratic twists, is $-1$.
Such examples are impossible for elliptic curves over $\Q$, but do exist for elliptic curves 
over number fields \cite{Evilquad}.
The question of existence of such abelian varieties over $\Q$ was posed to us by V. Flynn.

\begin{example}
Consider the genus 2 curve
$$
  C/\Q: y^2 + y = x^5-11x^4-6x^3+9x^2+x-1,
$$
and let $J/\Q$ be its Jacobian. 
For a prime $p$, let $\rho_J=\rho_{J,p}: G_{\Q_p}\to \GL_4(\C)$ be the Galois representation
on 
$$
  H^1_{\text{\'et}}(C/\Q_p,\Q_l)\!\tensor_{\Ql}\!\C\>\>\iso\>\>
  H^1_{\text{\'et}}(J/\Q_p,\Q_l)\!\tensor_{\Ql}\!\C\>\>\iso\>\>
  (T_l J)^*\!\tensor_{\Z_l}\!\C\qquad(l\ne p).
$$
We will show that 
\begin{enumerate}
\item[(a)]
$J$ has bad reduction only at 13 and 2633.
\item[(b)]
$J$ is absolutely simple.
\item[(c)]
At $p=2633$ the representation $\rho_{J}=\rho_{J,p}$ is, in a suitable basis,
$$
  \rho_J(\Frob_{\Q_p}^{-1})={\smaller[2]
  \begin{pmatrix}
    1&0&0&0\cr 0&1&0&0 \cr 0&0&p&0 \cr 0&0&0&p
  \end{pmatrix}}, \qquad
  \rho_J(g)={\smaller[2]\begin{pmatrix}
    1&0&\tau(g )&0\cr 0&1&0&\tau(g ) \cr 0&0&1&0 \cr 0&0&0&1
  \end{pmatrix}}
$$
for all $g\in I_{\Q_p}$, where $\tau$ is the $l$-adic tame character.
\item[(d)]
At $p=13$ the representation $\rho_{J}=\rho_{J,p}$ factors through
$$
  G = \Gal(\Q_p^{nr}(\sqrt[4]{13})/\Q_p) \>\>\iso\>\> \hat\Z\times C_4.
$$
This group is generated by any Frobenius element $\Phi$ of $G_{\Q_p(\sqrt[4]{13})}$
and the inertia element $g$ that maps $\sqrt[4]{13}$ to $\iota\sqrt[4]{13}$.
(We fix $\iota$ to be the 4th root of unity congruent to 5 mod 13.)
In a suitable basis,
$$
  \def\q#1{\qquad\llap{#1}}
  \rho_J(\Phi^{-1})={\smaller[2]
  \begin{pmatrix}
    \q{-2-3i}&0&0&0\cr 0&\q{-2+3i}&0&0 \cr 0&0&\q{-2-3i}&0 \cr 0&0&0&\q{-2+3i}
  \end{pmatrix}}, \qquad
  \rho_J(g)={\smaller[2]\begin{pmatrix}
    1&0&0&0\cr 0&1&0&0\cr 0&0&-i&0 \cr 0&0&0&i
  \end{pmatrix}}.
$$
\item[(e)]
The root number of every quadratic twist of $J/\Q$ is $-1$.
Assuming the Birch-Swinnerton-Dyer conjecture for abelian varieties, every quadratic
twist of $J$ has positive Mordell-Weil rank.
\end{enumerate}
\end{example}

\begin{proof}
(a)
Using Magma \cite{Magma} and Sage \cite{Sage}, we find that
the curve $C$ has discriminant $13^3\cdot 2633^2$
and reduction types
\In{0}-\III-0 at 13, and \In{1\text{-}1\text{-}0} at 2633 in the
Namikawa-Ueno classification \cite{NU}. 

(b) Stoll (\cite{Sto} Lemma 3) gives an explicit criterion for $J$ to be absolutely 
irreducible: it suffices to find a prime $p$ of good reduction such that the local factor
$f(T)=P(\rho_{J,p},T)$ is irreducible, and such that the 
modified resultant $\Res_T(f(T),f(Tx))/(x-1)^{2\dim J}$ 
has no irreducible monic factors in $\Z[x]$ of constant term 1. Counting points over 
$\F_{3^n}$, we find that for $p=3$,
$$
  P(\rho_{J,p},T) = 1 - T + 2T^2 - 3T^3 + 9T^4, 
$$
which is irreducible, and 
$$
  \frac{\Res_T(f(T),f(Tx))}{(x-1)^{4}} = 3^8 (x^4+\frac 43 x^3+x^2+\frac 43 x+1)^2
    (x^4+x^3+\frac{16}9 x^2+x+1),
$$
fulfulling the criterion.

(c)
Analysis at $p=2633$: Either checking by bare hands or by using the classification 
of reduction types, we see that the equation for $C$ defines a regular model at $p$.
The reduced curve is
$$
  \tilde C: \>\> y^2 = (x-2344)^2 (x-645)^2 (x-1952),
$$
so the special fibre is a $\P^1$ with two
self-intersections. The slopes at both singular points are $\F_p$-rational, 
so $\tilde C$ has $q+1-2$ points over any extension $\F_q/\F_p$. The local polynomial
is therefore
$$
  P(C/\Q_p,T) = (1-T)^2.    
$$
In particular, the invariant subspace $\rho_{J,p}^{I_p}$ is 2-dimensional, 
with trivial action of Frobenius. The action of the inertia group on 
the whole space is described in Namikawa-Ueno \cite{NU} p. 179, and the action 
of Frobenius is forced by its action on inertia invariants and the relation
$\Frob_{\Q_p} g\Frob_{\Q_p}^{-1}=g^p$ for $g$ in the tame inertia quotient. 

(d) Analysis at $p=13$. The reduced curve is
$$
  \tilde C:\>\> y^2 = (x+2)^3 (x^2+9x+6),
$$
and its normalisation is an elliptic curve
\begingroup\def\theequation{$\dagger$}
\begin{equation}\label{E1eq}
  E_1/\F_{13}:\>\> y^2 = (x+2) (x^2+9x+6).
\end{equation}
\endgroup
From the Namikawa-Ueno classification \cite{NU} p. 161,
$$
  \rho_{J,13} = \rho_{J,13}^{I_{13}} \oplus W
$$
with 2-dimensional summands, as an $I_{13}$-representation. Moreover, inertia
acts on $W$ as $\smallmatrix 01{-1}0$, that is through the cyclic tame
quotient of order 4.
Since the inertia subgroup is normal,
the action of Frobenius necessarily preserves this decomposition.
Using Magma \cite{Magma}, we find
$$
  P(\rho_{J,13},T) = 1+4 T+13T^2,
$$
which describes the action of $G_{\Q_{13}}$ on the inertia invariant subspace $\rho_{J,13}^{I_{13}}$:
$$
  \Phi^{-1}\mapsto \smallmatrix {-2-3i}00{-2+3i}, \qquad
  g\mapsto\smallmatrix 1001\>\text{ for }\>g\in I_{13}.
$$

Next, we reconstruct the Galois action on the whole space from the local polynomials over 
suitable extensions of $\Q_{13}$. 
Consider $L=\Q_{13}(\sqrt[4]{13})$ and $L'=\Q_{13}(\sqrt[4]{26})$.
Over both fields the
curve $C$ has reduction of type \In0-\In0-1, 
its Jacobian has good reduction, and the special fibre of $C$ is two elliptic curves
meeting at a point.
One of these elliptic curves over $L$ and over $L'$ is the same curve $E_1$ as above.
The other one is, respectively,
$$
  E_2:   y^2 = x^3-5x, \qquad
  E'_2:  y^2 = x^3-x.
$$
Their local polynomials are
\beq
  P(E_1/\F_{13},T)=1+4T+13T^2, \cr
  P(E_2/\F_{13},T)=1+4T+13T^2, \cr
  P(E'_2/\F_{13},T)=1-6T+13T^2.
\eeq
By counting points of $\tilde C, E_1, E_2$ and $E'_2$ over $\F_{13^n}$, 
considering the corresponding $\zeta$-functions and computing $H^0$
and $H^2$ of the special fibres, we find
\beq
  P(\rho_{J,13}/L,T) = (1+4T+13T^2)^2, \cr
  P(\rho_{J,13}/L',T) = (1+4T+13T^2)(1-6T+13T^2).
\eeq
Therefore the eigenvalues on $W$ of $g$, $\Frob_L^{-1}(=\Phi^{-1})$ and $\Frob_{L'}^{-1}$ are,
respectively,
$$
  \{i,-i\}, \qquad \{-2-3i,-2+3i\}, \qquad \{3-2i,3+2i\}.
$$
Exactly as in the end of Example \ref{exell1}, it follows that 
$\Phi^{-1}$ must act as $-2+3i$ on the $i$-eigenspace of $g$ on $W$
and as $-2-3i$ on the $(-i)$-eigenspace.
This completes the description of $\rho_{J,13}$.

(e)
Let $\chi: G_\Q\to \pm 1$ be a character of order 1 or 2, and let $J_\chi$ be the 
quadratic twist of $J$ by $\chi$. Thus $J_\chi$ is the Jacobian of
$$
  C_d: dy^2 = x^5-11x^4-6x^3+9x^2+x-\frac34,
$$
where $\Q(\sqrt d)$ is the field cut out by $\chi$. As a $G_\Q$-representation
\hbox{$T_l J_\chi\!\iso\!T_l J\!\tensor\!\chi$,} and
$$
  \rho_{J_\chi,p} \iso \rho_{J,p} \tensor \chi
$$
for every prime $p$. The global root number $w(J_\chi/\Q)$ is, by definition, the product
of local root numbers $w_v(J_\chi)= w(J_\chi/\Q_v)$ over all places of $\Q$,
$$
  w(J_\chi) = \prod_v w_v(J_\chi).   
$$
The local root number is the `sign' of the local $\epsilon$-factor,
$w_v(J_\chi) = \frac{\epsilon_v(J_\chi,\psi,\mu)}{|\epsilon_v(J_\chi,\psi,\mu)|}$,
and we refer to Tate \cite{TatN} for the definition and basic properties of 
local \hbox{$\epsilon$-factors}; see also \cite{Tamroot}, Appendix A.

We claim that, independently of $\chi$,
$$
  w_v(J_\chi) = \biggl\{\begin{tabular}{ll}
    $-1$ & if $v=13$, \cr
    $+1$ & otherwise,
  \end{tabular}
$$
so the global root number of any quadratic twist of $J$ is $-1$, as required.

For $v=\infty$, the local root number is defined in terms of the Hodge structure of $J_\chi$,
and for an abelian variety $A/\Q$ it is simply $(-1)^{\dim A}$; 
see e.g. \cite{SabR} Lemma 2.1. In our case $\dim J_\chi=2$ and $w_v(J_\chi) =+1$.

At all primes $v\ne 13$, the abelian variety $J$ is semistable,
and the root number computation is standard: see e.g.
\cite{Tamroot} Prop. 3.23 with $\tau=\chi$ and $X(\cT^*)=\triv\oplus\triv$ for $v=2263$
and 0 else; note also that $w(\chi)^4=1$ by Lemma~\ref{lemquad}~(1).

At $v=13$, the representation $\rho_{J,13}$ is described above in (d). 
Observe~that 
$$
  \rho_{J,13}\iso \rho_{E,13}\oplus \rho_{E',13},
$$
where $E$ is the curve \eqref{E1eq} lifted to $\Q_{13}$,
$$
  E: y^2 = (x+2)(x^2+9x+6),
$$
and $E'$ is the curve in Example \ref{exell1},
$$
  E': y^2 = x^3 - 26x.
$$
The first one has good reduction, and the second one has additive reduction of type \III.
In the terminology of \cite{Evilquad}, 
$E$ is `lawful good' and $E'$ is `lawful evil' (see \cite{Evilquad} Classification 3).
In other words, the local root numbers are $w(E)=+1$, $w(E')=-1$ and $w(E/F)=w(E'/F)=1$ for 
every quadratic extension $F/\Q_{13}$. Now, if $\chi=\triv$, then
$$
  w_{13}(J\tensor\chi) = w(\rho_{J,13})=w(\rho_{E,13})w(\rho_{E',13}) = w(E)w(E') = -1.
$$
If $\chi$ has order 2 and $F$ is the field cut out by $\chi$, 
then by Lemma \ref{lemquad} (2),
$$
  w_{13}(J\tensor\chi) = w(E\tensor\chi)w(E'\tensor\chi) = 
    \frac{w(E/F)}{w(E)\chi(-1)}\>\frac{w(E'/F)}{w(E')\chi(-1)}
    = -1.    
$$
This completes the proof.
\end{proof}

\begin{lemma}
\label{lemquad}
Let $K$ be a local field and $\chi\!:\!G_K\!\to\!\pm 1$ a character of order~\hbox{$\le 2$}.
\begin{enumerate}
\item
$w(\chi)^2 = \chi(-1) = \pm 1$, where 
$\chi(-1)$ denotes $\chi$ evaluated on the image of $-1$ under the local reciprocity 
map $K^\times \to G_K^{ab}$.
\item
Suppose $\chi\ne \triv$ and let $F/K$ be the quadratic extension cut out by $\chi$.
For a Weil representation $V/K$ of even dimension $2g$, 
$$
  w(V) w(V\tensor \chi) = w(V/F) \chi(-1)^g.
$$
\end{enumerate}

\begin{proof}
(1) By the determinant formula \cite{TatN} 3.4.7,
$$
  w(\chi)^2  = w(\chi\oplus\bar\chi) = \chi(-1).
$$
(2) Write $\Ind$ for the induction of representations from $G_F$ to $G_K$. 
By inductivity of root numbers in degree 0,
\beq
  w(V) w(V\tensor \chi) &=& w(\Ind V/F) 
    = w(\Ind (V\ominus\triv^{\oplus 2g}/F)) w(\Ind\triv^{\oplus 2g}) \cr
    &=& \frac{w(V/F)}{w(\triv/F)^{2g}} w(\triv)^{2g} w(\chi)^{2g} 
    = w(V/F) \chi(-1)^g.     
\eeq
\end{proof}
\end{lemma}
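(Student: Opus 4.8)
The plan is to reduce both parts to two standard properties of local $\epsilon$-factors in Tate \cite{TatN}: the determinant formula, and inductivity of $\epsilon$-factors in dimension zero.

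For part (1): since $\chi$ takes values in $\pm1$ it is self-dual, $\bar\chi=\chi$. The determinant formula \cite{TatN}~3.4.7 states that $w(\sigma)\,w(\sigma^*)=(\det\sigma)(-1)$ for any (virtual) representation $\sigma$ of $G_K$, where $\det\sigma$ is the determinant character and $-1$ is pushed through local reciprocity. Applying this with $\sigma=\chi$, and using $\chi^*=\bar\chi=\chi$ together with $\det\chi=\chi$, gives $w(\chi)^2=\chi(-1)$; and $\chi(-1)\in\pm1$ because $\chi$ is $\pm1$-valued. (Equivalently, one runs the computation through $w(\chi\oplus\bar\chi)=\chi(-1)$.)

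For part (2): the arithmetic input is that inducing the restriction of $V$ to $G_F$ back up to $G_K$ gives, since $\Ind\triv\iso\triv\oplus\chi$,
$$
  \Ind\bigl(V|_{G_F}\bigr)\;\iso\;V\tensor\Ind\triv\;\iso\;V\oplus(V\tensor\chi),
$$
so $w(V)\,w(V\tensor\chi)=w(\Ind(V|_{G_F}))$. I would then peel off the trivial part: write $V|_{G_F}=\bigl(V|_{G_F}\ominus\triv^{\oplus 2g}\bigr)\oplus\triv^{\oplus 2g}$, where the first summand is a virtual representation of dimension $0$. Inductivity of $\epsilon$-factors in dimension $0$ applies to that summand and yields $w(\Ind(V|_{G_F}\ominus\triv^{\oplus 2g}))=w(V|_{G_F}\ominus\triv^{\oplus 2g})=w(V/F)\,w(\triv/F)^{-2g}$, while for the second summand $w(\Ind\triv^{\oplus 2g})=w(\triv\oplus\chi)^{2g}=w(\triv)^{2g}\,w(\chi)^{2g}$. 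Multiplying, the trivial root numbers are $1$ and $w(\chi)^{2g}=\chi(-1)^g$ by part (1), which gives $w(V)\,w(V\tensor\chi)=w(V/F)\,\chi(-1)^g$, as claimed.

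The one genuine point requiring care is the normalisation. Local $\epsilon$-factors are inductive only up to Langlands $\lambda$-factors (relative to compatible additive characters and Haar measures), so one cannot induce the $2g$-dimensional $V|_{G_F}$ directly; subtracting $\triv^{\oplus 2g}$ first is exactly what makes the $\lambda$-factor contribution trivial, after which passing to root numbers (dividing by absolute values) removes the remaining conductor normalisations. This is the step to state precisely, but it is entirely standard.
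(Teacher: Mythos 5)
Your argument is correct and follows essentially the same route as the paper: part (1) is the determinant formula applied to $\chi\oplus\bar\chi$, and part (2) uses $\Ind(V|_{G_F})\iso V\oplus(V\tensor\chi)$ together with inductivity of root numbers in degree zero after subtracting $\triv^{\oplus 2g}$. Your extra remarks on the projection formula and on why the degree-zero reduction is needed just make explicit what the paper leaves implicit.
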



\end{document}